\newcommand{\dmax}{\displaystyle\max}
\newcommand{\1}{\mathbf{1}}
\renewcommand{\H}{\mathcal{H}}
\newcommand{\ip}[2]{\left\langle #1 , #2 \right\rangle}    
\newcommand{\R}{{\mathbb R}}
\DeclareMathOperator*{\argmin}{arg\,min}
\DeclareMathOperator{\dist}{dist}
\newtheorem{theorem}{Theorem}
\newtheorem{proposition}{Proposition}
\def\transp{^{\text{\sf T}}}
\newcommand{\matr}[1]{\begin{bmatrix} #1 \end{bmatrix}}    
\title{An easily computable upper bound on the Hoffman constant for homogeneous inequality systems}
\author{
Javier F. Pe\~na\thanks{Tepper School of Business,
Carnegie Mellon University, USA, {\tt jfp@andrew.cmu.edu}}
}
\begin{document}

\maketitle

\begin{abstract}
Let $A\in \R^{m\times n}\setminus \{0\}$ and $P:=\{x:Ax\le 0\}$. This paper provides  a procedure to compute an upper bound on the following {\em homogeneous Hoffman constant}
\[
H_0(A) := \sup_{u\in \R^n \setminus P} \frac{\dist(u,P)}{\dist(Au, \R^m_-)}.
\]  
In sharp contrast to the intractability of computing more general Hoffman constants, the procedure described in this paper is entirely tractable and easily implementable. 
\end{abstract}
\section{Introduction}

Hoffman constants for systems of linear inequalities, and more general error bounds for feasibility problems, play a central role in mathematical programming.  In particular, Hoffman constants provide a key building block for the convergence of a variety of algorithms~\cite{ApplHLL22,BeckS17,LacoJ15,LeveL10,LuoT93,WangL14}.  Since Hoffman's seminal work~\cite{Hoff52}, Hoffman constants and more general error bounds has been widely studied~\cite{AzeC02,burke1996,guler1995,Li93,MangS87,robinson1973,Zali03,zheng2004}.  However, there has been very limited work on algorithmic procedures that compute or bound Hoffman constants.  The only two references that appear to tackle this computational challenge are the 1995 article by Klatte and Thiere~\cite{KlatT95} and the more recent 2021 article by Pe\~na, Vera, and Zuluaga~\cite{PenaVZ21}.   However, as it is discussed in both~\cite{KlatT95} and~\cite{PenaVZ21}, there are limitations on the algorithmic schemes proposed in both these articles.  

\medskip

The central goal of this paper is to devise a procedure that computes an upper bound on the following {\em homogeneous Hoffman constant} $H_0(A)$.  Suppose $A\in \R^{m\times n}$.  Let $P:=\{x:Ax\le 0\}$ and define $H_0(A)$ as 
\[
H_0(A):=\sup_{u\in \R^n \setminus P} \frac{\dist(u,P)}{\dist(Au, \R^m_-)}.
\]
For notational convenience, by convention let $H_0(A) := 0$  when $P=\R^n$.  This occurs precisely when $A = 0$.

To position this work in the context of Hoffman constants, we next recall the {\em local} and {\em global} Hoffman constants $H(A,b)$ and $H(A)$ associated to linear systems of inequalities defined by $A$.  The homogeneous Hoffman constant $H_0(A)$ is a special case of the following {\em local Hoffman constant} $H(A,b)$.  Suppose $A\in \R^{m\times n}$ and 
$b\in A\R^n + \R^m_+$. Let $P_A(b):=\{x\in \R^n: Ax\le b\}$ and define $H(A,b)$ as
\[
H(A,b):=\sup_{u \in \R^n\setminus P_A(b)} \frac{\dist(u,P_A(b))}{\dist(Au, \R^m_-)}.
\]
It is evident that $H_0(A) = H(A,0)$ and thus $H_0(A)$ is bounded above by the following {\em global Hoffman constant} $H(A)$.  Suppose $A\in \R^{m\times n}$.  Define
\[
H(A):=\sup_{b\in A\R^n + \R^m_+} H(A,b).
\] 
In his seminal paper~\cite{Hoff52}, Hoffman showed that $H(A)$ is finite and consequently so are $H_0(A)$ and $H(A,b)$ for all $b\in A\R^n + \R^m_+$.

The articles~\cite{KlatT95,PenaVZ21} propose algorithms to compute or estimate the global Hoffman constant $H(A)$.  These algorithms readily yield a computational procedure to bound $H_0(A)$.  However, as it is detailed in~\cite{KlatT95,PenaVZ21}, except for very special cases the computation or even approximation of $H(A)$ is an extremely challenging problem.  Indeed, the recent results in~\cite{PenaVZ2019} show that the Stewart-Todd condition measure $\chi(A)$~\cite{Stew89,Todd90} is the same as $H(\mathbf{A})$ where $\mathbf{A} = \matr{A\\-A}$.  Since the quantity $\chi(A)$ is known to be NP-hard to approximate~\cite{Khac95}, so is $H(A)$. The computation of the (non-homogeneous) local Hoffman constant $H(A,b)$, as discussed in~\cite{AzeC02,zheng2004}, also poses similar computational challenges. In sharp contrast, the procedure proposed in this paper for upper bounding the more specialized Hoffman constant $H_0(A)$ is entirely tractable and easily implementable for any $A\in \R^{m\times n}$.  
The bound is a formalization of the following three-step approach detailed in Section~\ref{sec.upper.bounds}.

\medskip

First, upper bound $H_0(A)$ in the following two special cases:
\begin{itemize}\item[(i)] 
When $A\hat x < 0$ for some $\hat x \in \R^n$ or equivalently when $A\transp y = 0, y\ge 0 \Rightarrow y =0$.  (See Proposition~\ref{prop.case.1}.)
\item[(ii)] When $A\transp \hat y = 0$ for some $\hat y > 0$ or equivalently when
$Ax\le 0\Rightarrow Ax=0$. (See Proposition~\ref{prop.case.2}.)
\end{itemize}

\medskip

Second, use a canonical partition $A = \matr{A_B\\ A_N}$ of the rows of $A$ such that $A_N$ is as in case (i) and $A_B$ is as in case (ii) above.  (See Proposition~\ref{prop.partition}.) 

\medskip

Third, upper bound $H_0(A)$ by stitching together the Hoffman constants $H_0(A_B)$, $H_0(A_N),$ and a third Hoffman constant  $\H(L,K)$ associated to the intersection of the subspace $L:=\{x: A_Bx = 0\}$ and the cone $K:=\{x: A_N x \le 0\}$. (See Theorem~\ref{main.thm}.)

\medskip

The above steps suggest the following computational procedure to upper bound $H_0(A)$:
First, compute the partition $B,N$.  Second, compute upper bounds on $H_0(A_B)$ and on $H_0(A_N)$.  Third, upper bound $\H(L,K)$.
Section~\ref{sec.computation} details this procedure.  As explained  in Section~\ref{sec.computation}, the total computational work in the entire procedure consists of two linear programs, two quadratic programs, a convex program, and a singular value calculation, all of which are computationally tractable.  
This is noteworthy in light of the challenges associated to estimating the Hoffman constants $H(A)$ and $H(A,b)$. 
A Python implementation and some illustrative examples of this procedure are publicly available at
\begin{center}
{\tt https://github.com/javi-pena} 
\end{center}

For ease of notation and computability, we assume throughout the paper that the norm in $\R^m$ satisfies the following {\em componentwise compatibility condition:} if $y,z\in \R^m$ and  $|y|\le |z|$ componentwise then $\|y\| \le \|z\|$.  The componentwise compatibility condition in particular implies that for all $u\in\R^n$
\[
\dist(Au,\R^n_-) = \|(Au)^+\|
\]
where $(Au)^+ = \max\{Au,0\}$ componentwise.  Consequently, 
\[
H_0(A)=\sup_{u\in \R^n \setminus P} \frac{\dist(u,P)}{\|(Au)^+\|}.
\]
Observe that most of the usual norms in $\R^m$, including the $\ell_p$ norms for $1\le p \le \infty$ satisfy the componentwise compatibility  condition.

We conclude this introduction by highlighting that our developments for bounding $H_0(A)$ rely critically on the features of  homogeneous systems of inequalities.  In contrast to non-homogeneous systems of inequalities and more general affine cone inclusions, homogeneous systems of inequalities and more general homogeneous affine cone inclusions possess a number of attractive properties as discussed in~\cite{BurkD09,Robi72,Robi76,Urse75}.  
In particular, although it is tempting to conjecture that a bound on the non-homogeneous Hoffman constant $H(A,b)$ could be obtained from some $H_0(A_b)$ via homogenization, that is not the case as we next detail.  Indeed, consider the natural homogenization $A_b z \le 0$ of the system of inequalities $Ax\le b$ where 
\[
A_b := \matr{A&-b\\0&-1}, \; z := \matr{y\\t}.
\]
The following  example shows that $H(A,b)$ cannot be bounded above by any reasonable multiple of $H_0(A_b)$. Suppose $0 < \epsilon < 1$ and let
\[
A = \matr{1 & \epsilon \\
-1 & \epsilon \\
0& -1 }, \; b = \matr{1\\1\\0}.
\]
Then
\[
A_b = \matr{1 & \epsilon & -1\\
-1 & \epsilon & -1\\
0& -1 & 0 \\
0&0&-1}.
\]
For ease of computation, suppose all relevant spaces are endowed with the infinite norm.
Hence the remarks following Proposition~\ref{prop.case.1} below imply that $H_0(A_b) \le 1$.  On the other hand, $H(A,b) \ge 1/\epsilon$ because $Ax \le b$ implies that $x_2\le 1/\epsilon$ and thus for $\tilde x = \matr{0,2/\epsilon}$ we have $\|(A\tilde x-b)_+\|_{\infty}=1$ but $\|\tilde x-x\|_{\infty}\ge 1/\epsilon = 1/\epsilon \cdot \|(A\tilde x-b)_+\|_{\infty}$ for any $x$ such that $Ax \le b$.  Since this holds for any $0<\epsilon < 1$, it follows that $H(A,b)$ cannot be bounded above in terms of $H_0(A_b)$.

\section{Upper bounds on $H_0(A)$}
\label{sec.upper.bounds}
\subsection{Upper bounds on $H_0(A)$ in two special cases}

We next consider two special cases that can be seen as dual counterparts of each other.

\begin{proposition}\label{prop.case.1}
Suppose $A\in \R^{m\times n}$ and $A\hat x < 0$ for some $\hat x\in\R^n$  or equivalently $A\transp y = 0, y \ge 0 \Rightarrow y=0$. Then
\begin{equation}
\label{eq.case.1}
H_0(A) \le \max_{y\in\R^m\atop\|y\|\le 1} \min_{x\in \R^n\atop Ax\le y} \|x\|.
\end{equation}
\end{proposition}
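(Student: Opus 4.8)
The plan is to bound the ratio $\dist(u,P)/\|(Au)^+\|$ uniformly over $u \in \R^n \setminus P$ by constructing, for each such $u$, an explicit point $p \in P$ whose distance to $u$ is at most $\|(Au)^+\|$ times the right-hand side of~\eqref{eq.case.1}. Write $M$ for that right-hand side. Before the main construction I would record two preliminary facts. First, the hypothesis $A\hat x < 0$ guarantees that $\{x : Ax \le y\}$ is nonempty for every $y \in \R^m$, since a sufficiently large positive multiple of $\hat x$ satisfies the constraints; because $\|\cdot\|$ is coercive and this set is closed, the inner minimum $\min_{Ax \le y}\|x\|$ is attained and finite. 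Maximizing a finite-valued function over the compact unit ball then shows $M < \infty$.

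The heart of the argument is a scaling construction. Fix $u \in \R^n \setminus P$ and set $r := \|(Au)^+\|$; since $u \notin P$ we have $(Au)^+ \ne 0$ and hence $r > 0$. Normalize by setting $\hat y := -(Au)^+/r$, so that $\|\hat y\| = 1$ and $\hat y \le 0$ componentwise. By the definition of $M$ together with the attainment noted above, there exists $x^\ast \in \R^n$ with $Ax^\ast \le \hat y$ and $\|x^\ast\| \le M$. I would then take the candidate point
\[
p := u + r\,x^\ast .
\]

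To finish I would verify $p \in P$ and estimate the distance. Using $Ax^\ast \le \hat y$ and $r > 0$,
\[
Ap = Au + r\,Ax^\ast \le Au + r\,\hat y = Au - (Au)^+ = \min\{Au,0\} \le 0,
\]
so indeed $p \in P$. Consequently
\[
\dist(u,P) \le \|u - p\| = \|r\,x^\ast\| = r\,\|x^\ast\| \le r\,M = \|(Au)^+\|\cdot M ,
\]
and dividing by $\|(Au)^+\|$ and taking the supremum over $u \in \R^n \setminus P$ yields $H_0(A) \le M$, which is~\eqref{eq.case.1}.

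I expect the main subtlety to be organizational rather than technical: choosing the correct normalization and sign for $\hat y$ so that scaling back by $r$ produces a point whose image under $A$ is dominated by $\min\{Au,0\}$, and thereby invoking the definition of $M$ at a unit-norm vector. The key identity making this work is $(Au)^+ = Au - \min\{Au,0\}$, which converts the constraint $Ax^\ast \le \hat y$ into the feasibility certificate $Ap \le 0$. Verifying finiteness of $M$ uses only the hypothesis of case (i) and coercivity of the norm.
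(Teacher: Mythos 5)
Your proposal is correct and takes essentially the same route as the paper: for $u \notin P$ you normalize $(Au)^+$, invoke the definition of the right-hand side at a unit-norm vector to get a feasible $x^\ast$ with $Ax^\ast \le -(Au)^+/\|(Au)^+\|$, scale back, and check that $u + \|(Au)^+\|\,x^\ast \in P$. The paper's proof folds your explicit normalization/rescaling step into the single statement that there exists $x$ with $Ax \le -(Au)^+$ and $\|x\| \le H\cdot\|(Au)^+\|$, but the construction is identical.
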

\begin{proof}
For ease of notation, let $H$ denote the right-hand side expression in~\eqref{eq.case.1}, that is,
\[
H := \max_{y\in\R^m\atop\|y\|\le 1} \min_{x\in \R^n\atop Ax\le y} \|x\| = \max_{y\in\R^m\setminus\{0\}} \min_{x\in \R^n\atop Ax\le y} \frac{\|x\|}{\|y\|} .
\]
Observe that $H < +\infty$ because the assumption on $A$ implies that $A\R^n + \R^m_+ = \R^m$. 

We need to show that $H_0(A) \le H$.  To that end, let $P:=\{x\in \R^n: Ax \le 0\}$ and suppose that $u \in \R^n\setminus P$.  Let $y:=(Au)^+ \in \R^m$.  The construction of $H$ implies that there exists $x \in \R^n$ such that $Ax\le -y$ and $\|x\| \le H \cdot \|y\| = H \cdot \|(Au)^+\|$.  Thus $x+u \in P$ because
$$A(x+u) = Ax + Au \le -y + Au = -(Au)^+ + Au\le 0.$$
Furthermore $\|(x+u) - u\| = \|x\| \le H \cdot \|(Au)^+\|$. Since this holds for all $u\in\R^n\setminus P$, it follows that $H_0(A) \le H$.
\end{proof}
In addition to the simple direct proof above, an alternative proof of Proposition~\ref{prop.case.1} can also be obtained from~\cite{PenaVZ21}.  Indeed,~\cite[Proposition 2]{PenaVZ21} implies that when $A\in \R^{m\times n}$ satisfies the assumption in Proposition~\ref{prop.case.1}, the right-hand side in~\eqref{eq.case.1} is precisely the global Hoffman constant $H(A)$ which is at least as large as $H_0(A)$ as previously noted.

For computational purposes, it is useful to note that when $\R^m$ is endowed with the $\ell_\infty$ norm, the upper bound in Proposition~\ref{prop.case.1} can be computed via the following convex optimization problem:
\[
\min\{ \|x\|:  Ax \ge \1\}.
\]
In particular, any $\bar x \in \R^n$ such that $A\bar x \ge \1$ yields the upper bound $$H_0(A) \le \|\bar x\|.$$ 

\bigskip

The following proposition, which can be seen as a dual counterpart of Proposition~\ref{prop.case.1}, relies on the dual norms in $\R^m$ and $\R^n$.  More precisely, suppose both $\R^m$ and $\R^n$ are endowed with their canonical inner products.  In each case let $\|\cdot\|^*$ denote the norm defined as
\[
\|u\|^* = \max_{\|x\|\le 1} \ip{u}{x}.
\]

\begin{proposition}\label{prop.case.2}
Suppose $A\in \R^{m\times n}$ is such that $A\transp\hat y = 0$ for some $\hat y >0 $  or equivalently $Ax \le 0 \Rightarrow Ax = 0$. Then
\begin{equation}
\label{eq.case.2}
H_0(A) \le 
\max_{v \in A\transp(\R^m) \atop \|v\|^*\le 1} \min_{y\in \R^m_+, A\transp y = v} \|y\|^*.
\end{equation}
\end{proposition}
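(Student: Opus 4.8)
The plan is to exploit the fact that the hypothesis $Ax\le 0\Rightarrow Ax=0$ forces $P:=\{x:Ax\le 0\}$ to coincide with the subspace $\Ker(A)$, and then to dualize the distance to this subspace. Concretely, I would first record the standard norm-duality representation of the distance to a subspace: working with the canonical inner products, the annihilator of $\Ker(A)$ is $A\transp(\R^m)$, and the support function of a subspace is the indicator of its annihilator, so that
\[
\dist(u,P)=\max_{v\in A\transp(\R^m),\ \|v\|^*\le 1}\ip{v}{u}
\]
for every $u\in\R^n$. This representation plays on the dual side the role that the primal quantity $H$ played in the proof of Proposition~\ref{prop.case.1}.

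The heart of the argument is then a short manipulation. Fix $u\in\R^n\setminus P$ and let $v^*$ attain the maximum above, so $\dist(u,P)=\ip{v^*}{u}$ with $v^*\in A\transp(\R^m)$ and $\|v^*\|^*\le 1$. For any nonnegative representation $v^*=A\transp y$ with $y\ge 0$ I would bound
\[
\ip{v^*}{u}=\ip{y}{Au}\le\ip{y}{(Au)^+}\le\|y\|^*\,\|(Au)^+\|,
\]
where the first inequality uses $y\ge 0$ together with $Au\le (Au)^+$ componentwise, and the second is the dual-norm (Hölder) inequality in $\R^m$. Minimizing the right-hand side over all feasible $y$, and noting that $v^*$ is itself one of the competitors in the outer maximum of~\eqref{eq.case.2}, I would conclude $\dist(u,P)\le\RHS\cdot\|(Au)^+\|$; dividing by $\|(Au)^+\|$ and taking the supremum over $u\in\R^n\setminus P$ then yields the claim. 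The trivial case $u\in P$ is consistent (both sides vanish), so restricting to $u\in\R^n\setminus P$ loses nothing.

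The step that genuinely invokes the hypothesis — and the one I expect to be the main obstacle to state cleanly — is the existence of a nonnegative representation $v^*=A\transp y$, $y\ge 0$, which is also precisely what makes the inner minimum in~\eqref{eq.case.2} well posed and finite. I would handle it by a shifting argument: starting from any representation $v^*=A\transp y_0$ (available since $v^*\in A\transp(\R^m)$), the vector $y:=y_0+t\hat y$ satisfies $A\transp y=v^*$ because $A\transp\hat y=0$, and since $\hat y>0$ it becomes strictly positive, hence nonnegative, for all sufficiently large $t$. Thus $\{y\ge 0:A\transp y=v^*\}$ is nonempty; coercivity of the norm $\|\cdot\|^*$ then gives attainment of the inner minimum, and the chain of inequalities above applies. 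The remaining verifications — compactness of $\{v\in A\transp(\R^m):\|v\|^*\le 1\}$ to justify attainment of the outer maximum, and the identification of the annihilator of $\Ker(A)$ with $A\transp(\R^m)$ — are routine and purely linear-algebraic.
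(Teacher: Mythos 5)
Your proof is correct and takes essentially the same route as the paper's: both arguments rest on the dual characterization of $\dist(u,\Ker(A))$ by a unit-dual-norm vector in $A\transp(\R^m)$ (the paper extracts it from the optimality conditions of the projection onto $P=\Ker(A)$, you from the annihilator/support-function duality), followed by the identical chain $\ip{y}{Au}\le\ip{y}{(Au)^+}\le\|y\|^*\cdot\|(Au)^+\|$ for a nonnegative representation $v=A\transp y$. Your explicit shifting argument $y_0+t\hat y$, $t$ large, merely spells out the paper's one-line observation that the hypothesis forces $A\transp\R^m_+=A\transp\R^m$.
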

\begin{proof} We shall assume that $A\ne 0$ as otherwise $H_0(A) = 0$ and~\eqref{eq.case.2} trivially holds.
Again for ease of notation, let $H$ 
denote the right-hand side expression in~\eqref{eq.case.2}, that is,
\[
H := \max_{v \in A\transp(\R^m) \atop \|v\|^*\le 1} \min_{y\in \R^m_+, A\transp y = v} \|y\|^* = \max_{v \in A\transp(\R^m) \atop v \ne 0} \min_{y\in \R^m_+, A\transp y = v}  \frac{\|y\|^*}{\|v\|^*}.
\]
Observe that $H < +\infty$ because the assumption on $A$ implies that $A\transp \R^m_+= A\transp\R^m$. 

We need to show that $H_0(A) \le H$.  To that end, let $P:=\{x\in \R^n: Ax \le 0\}= \{x \in \R^n: Ax = 0\}$ and suppose that $u \in \R^n\setminus P$.  Let 
\[
\bar x := \argmin_{x \in P} \|u-x\| = \argmin_{x: Ax=0} \|u-x\|.
\]
The optimality conditions of the latter problem imply that there exists $v\in A\transp \R^m$ with $\|v\|^*=1$ such that
\[
\|u-\bar x\| = \ip{v}{u-\bar x}.
\]
The construction of $H$ implies that there exists $y \in \R^m_+$ such that $A\transp y =v$ and $\|y\|^* \le H.$  Since $v = A\transp y$ we have 
\[
\|u-\bar x\| = \ip{v}{u-\bar x} = \ip{A\transp y}{u-\bar x} = \ip{y}{A(u-\bar x)}
=\ip{y}{Au}.
\]
In addition, since $y \in \R^m_+$ and $\|y\|^* \le H$, we also have
\[
\|u-\bar x\| = \ip{y}{Au} \le \ip{y}{(Au)^+} \le \|y\|^* \cdot\|(Au)^+\| \le H \cdot \|(Au)^+\|.
\]
Since this holds for all $u\in\R^n\setminus P$, it follows that $H_0(A) \le H$.
\end{proof}

For computational purposes, it is useful to note that when $\R^m$ is endowed with the $\ell_\infty$ norm, the upper bound in Proposition~\ref{prop.case.2} can be computed as follows
\[
\max_{v\in A\transp(\R^m) \atop \|v\|^*\le 1} \min_{y\in \R^m_+\atop A\transp y = v} \1\transp y.
\]
The reciprocal of the latter quantity in turn is the radius of the largest ball in $A\transp(\R^m)$ centered at $0$ and contained in the set 
\[
\{A\transp y: y \in \R^m_+,  \1\transp y = 1\} = \{A\transp y: y \in \R^m_+,  \1\transp y \le 1\}.
\]
Therefore, if in addition $\R^n$ is endowed with the $\ell_2$ norm then any $\bar y\in \R^m_{++}$ with $\1\transp \bar y =1$ and $A\transp \bar y = 0$ yields the upper bound
\begin{equation}\label{eq.upper.2}
H_0(A) \le \frac{2}{\sigma_{\min}^+(A\transp \bar  Y)},
\end{equation}
where $\bar Y = \text{Diag}(\bar y)$ and $\sigma_{\min}^+(A\transp \bar Y)$ denotes the smallest positive singular value of $A\transp \bar Y$.
To see why~\eqref{eq.upper.2} holds, observe that if $v \in A\transp \R^m$ and $\|v\|_2 \le \frac{\sigma_{\min}^+(A\transp \bar Y)}{2}$ then $2v = A\transp \bar Y z$ for some $\|z\|_2\le 1$. The latter implies that $|\bar Yz| \le \bar y$ componentwise and thus $2v = A\transp(\bar y + \bar Yz)$ with
\[
\bar y + \bar Yz \in \R^m_+ \text{ and } \1\transp(\bar y+\bar Yz) \le 2\cdot \1\transp \bar  y =2.
\]
In particular, $v\in \{A\transp y: y\in \R^m_+, \1\transp y \le 1\}$.  Since this holds for any $v\in A\transp \R^m$ with $\|v\|_2 \le \frac{\sigma_{\min}^+(A\transp \bar Y)}{2}$, it follows that the radius of the largest ball in  $A\transp(\R^m)$ centered at $0$ and contained in the set 
\[
\{A\transp y: y \in \R^m_+,  \1\transp y = 1\} = \{A\transp y: y \in \R^m_+,  \1\transp y \le 1\}.
\]
is at least $\frac{\sigma_{\min}^+(A\transp \bar Y)}{2}$.

\subsection{Upper bound on $H_0(A)$ for general $A$}

An upper bound on $H(A)$ for general $A\in \R^{m\times n}$ follows by stitching together the cases in the above two propositions via the the canonical partition result in Proposition~\ref{prop.partition} and 
the additional Hoffman constant $\H(L,K)$ defined in~\eqref{eq.hoffman} below.  

The following result is a consequence of the classical Goldman-Tucker partition theorem.  To make our exposition self-contained, we include a proof. 
  
\begin{proposition}\label{prop.partition} Let $A\in \R^{m\times n}$. There exists a unique partition $B\cup N = \{1,\dots,m\}$ such that 
\[
A_B \hat x = 0, \; A_N\hat x < 0 \text{ for some } \hat x\in \R^n
\]
and
\[
A_B\transp  \hat y_B = 0 \text{ for some } \hat y_B > 0.
\]
\end{proposition}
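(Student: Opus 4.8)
The plan is to exhibit the partition explicitly and to show it is forced by two one-sided characterizations, which will deliver existence and uniqueness simultaneously. Writing $P = \{x : Ax \le 0\}$, I would declare $i \in N$ precisely when the $i$-th constraint can be made strict inside $P$, i.e. when there is some $x \in P$ with $(Ax)_i < 0$, and put $i \in B$ otherwise. This defines a genuine partition with no choices left open, so once I verify the two displayed properties for it, uniqueness reduces to showing that any qualifying partition must coincide with this one.

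For the primal property ($A_B \hat x = 0$ and $A_N \hat x < 0$), I would exploit that $P$ is a convex cone. For each $i \in N$ pick a witness $x_i \in P$ with $(Ax_i)_i < 0$ and set $\hat x := \sum_{i \in N} x_i \in P$. Then $A\hat x \le 0$, so $(A\hat x)_i = 0$ for every $i \in B$ by the very definition of $B$, giving $A_B \hat x = 0$; and for $i \in N$ the $i$-th coordinate is at most $(Ax_i)_i < 0$, since every other summand contributes a nonpositive amount, giving $A_N \hat x < 0$.

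The dual property ($A_B\transp \hat y_B = 0$ with $\hat y_B > 0$) is the main obstacle and carries the Goldman--Tucker content. The key lemma I would isolate first is a support statement: if $y \ge 0$ and $A\transp y = 0$, then $y_j = 0$ for every $j \in N$. This follows by pairing with a witness $x \in P$ for the index $j$: the scalar $0 = \ip{A\transp y}{x} = \ip{y}{Ax}$ is a sum of nonpositive terms, and the term $y_j (Ax)_j$ would be strictly negative if $y_j > 0$, a contradiction. With this lemma in hand, for each fixed $i \in B$ the implication ``$Ax \le 0 \Rightarrow (Ax)_i \ge 0$'' is valid (precisely because $i \in B$), so Farkas' lemma yields $z^{(i)} \ge 0$ with $A\transp z^{(i)} = 0$ and $z^{(i)}_i > 0$; the support lemma then forces $z^{(i)}$ to be supported on $B$. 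Summing over $i \in B$ produces $\hat y := \sum_{i \in B} z^{(i)} \ge 0$, supported on $B$, with $A\transp \hat y = 0$ and every $B$-coordinate strictly positive, and its restriction $\hat y_B$ is the desired certificate.

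Finally, for uniqueness, suppose $B' \cup N'$ also satisfies both properties. The primal witness $\hat x'$ lies in $P$ and is strictly negative on $N'$, so each $i \in N'$ meets the defining condition of $N$, giving $N' \subseteq N$. The dual witness, extended by zero off $B'$, is a nonnegative vector in the kernel of $A\transp$ supported exactly on $B'$, so the support lemma gives $B' \subseteq B$. These two inclusions are equivalent to $B \subseteq B'$ and $B' \subseteq B$, hence $B = B'$ and $N = N'$, establishing uniqueness.
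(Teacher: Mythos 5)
Your proposal is correct and follows essentially the same route as the paper: define $N$ as the set of indices at which some point of $P=\{x:Ax\le 0\}$ makes the constraint strict, build the primal witness $\hat x$ by summing witnesses (using that $P$ is a convex cone), and obtain the dual certificate by applying Farkas' lemma for each $i\in B$ and summing the resulting vectors. If anything, your write-up is slightly more complete than the paper's: your ``support lemma'' (any $y\ge 0$ with $A\transp y=0$ vanishes on $N$) and your explicit uniqueness argument are steps the paper leaves implicit --- it only remarks that its maximal $N$ is well-defined and unique, which justifies uniqueness of its construction but not, on its own, that every partition satisfying both displayed properties must coincide with it.
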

\begin{proof}
Let $N\subseteq\{1,\dots,m\}$ be the largest subset of $\{1,\dots,m\}$ such that
\[
Ax \le 0 \text{ and } A_Nx < 0
\]
has a solution.  
In other words,
\[
N:=\{i\in \{1,\dots,m\}: Ax\le 0 \text{ and } (Ax)_i < 0 \text{ for some } x\in \R^n\}.
\]
Observe that $N$ is well-defined and unique and thus so is $B:=\{1,\dots,m\}\setminus N$.  Furthermore the construction of $N$ implies that $Ax\le 0$ and $A_Nx<0$ for some $x\in\R^n$. Hence   
to finish the proof it suffices to show that 
\[A_B\transp y_B  = 0,\; y_B > 0\] 
has a solution.
To that end, for $i\in\{1,\dots,m\}$ let $e_i\in \R^n$ is the vector with $i$-th component equal to one and all other equal to zero.  Observe that $i \in B$ if and only if the following system of equations and inequalities does not have a solution:
\[
\matr{ A & e_i}\matr{x\\t} \le 0, \matr{0 & 1}\matr{x\\t} > 0.
\]
Farkas Lemma thus implies that $i\in B$ if and only if the following system of equations and inequalities has a solution:
\[
\matr{A\transp \\ e_i\transp} y = \matr{0 \\ 1},  y \ge 0.
\]
Since this holds for each $i\in B$, it follows that $A_B\transp y_B =0, y_B > 0$ has a solution.
\end{proof}

We should note that,  depending on $A$, the set $N$ in Proposition~\ref{prop.partition} could be any subset of $\{1,\dots,m\}$.  In particular, $N = \emptyset$ if $A\transp y = 0$ for some $y > 0$, and $N = \{1,\dots,m\}$ if $Ax = 0$ for some $x>0$.  For instance, $N = \emptyset$ if $A = \matr{1\\-1}$ and $N = \{1,2\}$ if $A = 
\matr{1\\1}$.

\bigskip

Suppose $L\subseteq \R^n$ is a linear subspace and $K \subseteq \R^n$ is a closed convex cone.
Let
\begin{equation}\label{eq.hoffman}
\H(L,K) = \sup_{u \in \R^n\setminus L\cap K} \frac{\dist(u,L\cap K)}{\max\{\dist(u,L),\dist(u,K)\}},\end{equation}
with the convention that $\H(L,K) =0$ when $L\cap K = \R^n$.

In the remainder of this paper, we will use the following notation for $A\in \R^{m\times n}$: Let $B,N$ denote the canonical partition defined by $A$ as in Proposition~\ref{prop.partition} and let $L\subseteq \R^n, K\subseteq \R^n$ be defined as 
\[
L:=\{x: A_B x = 0\}, K := \{x: A_N x \le 0\},
\]
with the convention that $L = \R^n$ if $B=\emptyset$ and $K = \R^n$ if $N=\emptyset$.

Observe that $L$ is a linear subspace, $K$ is a closed convex cone, and $\{x: Ax\le 0\} = L\cap K$.  We now have all the necessary ingredients to upper bound $H_0(A)$.

\begin{theorem}\label{main.thm} Suppose $A\in \R^{m\times n}$ and 
the norm in $\R^m$ satisfies the componentwise compatibility condition. Let $B,N$ and $L,K$ be as above.  Then
\begin{equation}\label{eq.main.bound}
H_0(A) \le \H(L,K) \cdot \max\{H_0(A_N),H_0(A_B)\}.
\end{equation}
\end{theorem}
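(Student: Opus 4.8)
The plan is to fix an arbitrary $u \in \R^n \setminus P$ and to chain together three inequalities, one contributed by each of the three Hoffman constants appearing on the right-hand side of~\eqref{eq.main.bound}. Since $P = \{x : Ax \le 0\} = L \cap K$, the point $u$ lies outside $L \cap K$, so I may apply the definition~\eqref{eq.hoffman} of $\H(L,K)$ directly to split the distance to $P$ into distances to $L$ and to $K$:
\[
\dist(u,P) = \dist(u, L \cap K) \le \H(L,K) \cdot \max\{\dist(u,L),\, \dist(u,K)\}.
\]
It then remains to bound $\dist(u,L)$ and $\dist(u,K)$ by multiples of $\|(Au)^+\|$, and the two special cases of Section~\ref{sec.upper.bounds} are tailored precisely for this.

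For the subspace term I would use that the block $A_B$ falls under case~(ii): by Proposition~\ref{prop.partition} we have $A_B\transp \hat y_B = 0$ for some $\hat y_B > 0$, equivalently $A_B x \le 0 \Rightarrow A_B x = 0$. Hence $\{x : A_B x \le 0\} = \{x : A_B x = 0\} = L$, so $H_0(A_B)$ is the Hoffman constant of the system whose feasible set is exactly $L$, giving
\[
\dist(u,L) \le H_0(A_B)\cdot\|(A_B u)^+\|.
\]
Symmetrically, $A_N$ falls under case~(i), namely $A_N \hat x < 0$ for some $\hat x$, so $\{x : A_N x \le 0\} = K$ and
\[
\dist(u,K) \le H_0(A_N)\cdot\|(A_N u)^+\|.
\]
Each inequality holds for every $u$, being trivial when the left-hand distance vanishes.

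The step I expect to carry the technical weight is passing from the block quantities $\|(A_B u)^+\|$ and $\|(A_N u)^+\|$ back to the ambient quantity $\|(Au)^+\|$. Ordering the rows of $A$ according to the partition, we have $(Au)^+ = \big((A_B u)^+,\,(A_N u)^+\big)$ componentwise. I would endow $\R^{|B|}$ and $\R^{|N|}$ with the norms inherited from $\R^m$ by zero-padding into the $B$- and $N$-coordinates, so that $H_0(A_B)$ and $H_0(A_N)$ are measured in a way consistent with $\R^m$. Since the zero-padded copy of $(A_B u)^+$ is dominated in absolute value, componentwise, by $(Au)^+$, and likewise for $(A_N u)^+$, the componentwise compatibility condition yields
\[
\|(A_B u)^+\| \le \|(Au)^+\| \quad\text{and}\quad \|(A_N u)^+\| \le \|(Au)^+\|.
\]
This is the one place where componentwise compatibility is essential, and arranging the induced block norms to match the hypothesis is the only point requiring genuine care.

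Combining the four displays, I would estimate
\[
\dist(u,P) \le \H(L,K)\cdot\max\{H_0(A_B),H_0(A_N)\}\cdot\max\{\|(A_B u)^+\|,\|(A_N u)^+\|\},
\]
and the compatibility bounds above turn the last factor into $\|(Au)^+\|$. Dividing by $\|(Au)^+\| > 0$ and taking the supremum over $u \in \R^n\setminus P$ then gives~\eqref{eq.main.bound}. The overall argument is short once the three Hoffman constants are in place; the main obstacle is purely bookkeeping, namely confirming that the block norms defining $H_0(A_B)$ and $H_0(A_N)$ coincide with the induced norms above so that the two special-case bounds apply verbatim.
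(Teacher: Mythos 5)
Your proposal is correct and follows essentially the same route as the paper's proof: split $\dist(u,L\cap K)$ via $\H(L,K)$, bound $\dist(u,L)$ and $\dist(u,K)$ by $H_0(A_B)\|(A_Bu)^+\|$ and $H_0(A_N)\|(A_Nu)^+\|$ respectively, and invoke componentwise compatibility to replace the block quantities by $\|(Au)^+\|$. In fact you spell out two details the paper leaves implicit (why $L$ and $K$ are exactly the feasible sets of the subsystems, and how the block norms on $\R^{|B|}$, $\R^{|N|}$ are induced from the norm on $\R^m$), so no gap remains.
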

\begin{proof}
Suppose $u\in \R^n \setminus P$. The construction of $\H(\cdot,\cdot)$ and $H_0(\cdot),$ and the componentwise compatibility condition imply that there exists $x \in P=L\cap K$ such that 
\begin{align*}
\|x-u\| &\le \H(L,K)\cdot \max\{\dist(u,L),\dist(u,K)\} \\
&\le 
\H(L,K) \cdot \max\{H_0(A_B)\cdot \|(A_Bu)^+\|,H_0(A_N)\cdot \|(A_Nu)^+\|\} \\
&\le 
\H(L,K) \cdot \max\{H_0(A_B),H_0(A_N)\}\cdot \|(Au)^+\|. 
\end{align*}
Since this holds for all $u\in \R^n\setminus P$, the inequality in~\eqref{eq.main.bound} follows.  
\end{proof}
Observe that unlike $H_0(A)$ that depends on the {\em data representation} $A\in \R^{m\times n}$ of the cone $P=\{x: Ax\le 0\}$, the constant $\H(L,K)$ only depends on the sets $L\subseteq \R^n$ and $K\subseteq \R^n$.  In particular, $\H(L,K)$ does not depend on the norm in $\R^m$ while $H_0(A)$ evidently does. 

\medskip

The next proposition provides an upper bound on $\H(L,K)$ analogous to the upper bounds on $H_0(A)$ in Proposition~\ref{prop.case.1} and Proposition~\ref{prop.case.2}.  It will be useful for the computational procedure in Section~\ref{sec.computation}.

\begin{proposition}\label{prop.case.3}  Suppose $L\subseteq \R^n$ is a linear subspace and $K \subseteq \R^n$ is a closed convex cone. Then
\[
\H(L,K) \le 1 + 2\cdot \max_{u\in \R^n\atop \|u\|\le 1} \min_{x\in L, y\in K\atop x-y = u} \|x\|.
\]
\end{proposition}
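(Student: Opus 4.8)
The plan is to show that the right-hand side, which I will write as $1+2C$ with
\[
C := \max_{\|u\|\le 1}\ \min_{x\in L,\,y\in K,\,x-y=u}\ \|x\|,
\]
dominates every ratio appearing in the definition~\eqref{eq.hoffman} of $\H(L,K)$. First I would record the structural fact that makes $C$ usable. Picking $\bar x\in L\cap\interior(K)$, one has $t\bar x - u = t(\bar x - u/t)\in K$ for all sufficiently large $t$, so $u = (t\bar x)-(t\bar x-u)$ exhibits $u$ as an element of $L-K$; hence $L-K=\R^n$, the inner minimization is over a nonempty set, and the outer maximum over the compact unit ball is attained and finite. By positive homogeneity of the inner problem, this yields the decomposition I will exploit: every $w\in\R^n$ can be written as $w=x-y$ with $x\in L$, $y\in K$ and $\|x\|\le C\|w\|$.

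Next I would fix an arbitrary $u\in\R^n\setminus(L\cap K)$ and set $d:=\max\{\dist(u,L),\dist(u,K)\}$, the goal being to produce $z\in L\cap K$ with $\|u-z\|\le (1+2C)d$. I would first move $u$ onto $L$: choose $p\in L$ with $\|u-p\|=\dist(u,L)\le d$. Running the triangle inequality through a nearest point of $K$ to $u$ then gives $\dist(p,K)\le \|p-u\|+\dist(u,K)\le 2d$. Now choose $q\in K$ attaining this distance and set the residual $e:=q-p$, so that $\|e\|\le 2d$ (if $e=0$ then $p\in L\cap K$ and we are done). The crux is to feed this residual into the decomposition above: write $e=x-y$ with $x\in L$, $y\in K$ and $\|x\|\le C\|e\|\le 2Cd$.

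The key observation, which I expect to be the heart of the argument, is that the single point $z:=p+x$ lies in \emph{both} $L$ and $K$. Indeed $z\in L$ because $p,x\in L$; and since $x-y=e=q-p$ we may rewrite $z=p+x=q+y$, which lies in $K$ because $q,y\in K$ and $K$ is a convex cone closed under addition. Thus $z\in L\cap K$ with $\|z-p\|=\|x\|\le 2Cd$, and therefore
\[
\dist(u,L\cap K)\le \|u-z\|\le \|u-p\|+\|p-z\|\le d+2Cd=(1+2C)\,d.
\]
Dividing by $d=\max\{\dist(u,L),\dist(u,K)\}$ and taking the supremum over $u\in\R^n\setminus(L\cap K)$ delivers $\H(L,K)\le 1+2C$. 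The only real obstacle is recognizing this two-sided bookkeeping, namely that decomposing the residual $q-p$ and adding its $L$-part back to $p$ yields a point expressible simultaneously as $p+x\in L$ and $q+y\in K$; once that identity is in place, the remaining estimates are routine triangle inequalities.
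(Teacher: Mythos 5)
Your proof is correct and takes essentially the same approach as the paper: both arguments decompose the residual between the $L$-projection of $u$ and a nearby point of $K$ using the max--min constant, and both hinge on the identity that adding the $L$-part of that decomposition yields a point ($p+x=q+y$ in your notation) lying in $L\cap K$, followed by the same triangle-inequality bookkeeping. The only cosmetic difference is that you take $q$ to be the projection of $p=u_L$ onto $K$ whereas the paper uses the projection of $u$ itself onto $K$; both choices give the same bound of $2\max\{\dist(u,L),\dist(u,K)\}$ on the residual, and your explicit verification that $L-K=\R^n$ is a small added nicety the paper leaves implicit.
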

\begin{proof} To ease notation, let $$H:=\max_{u\in \R^n\atop \|u\|\le 1} \min_{x\in L, y\in K\atop x-y = u} \|x\|.$$ We need to show that $\H(L,K) \le 1 + 2H.$
To that end, suppose $u \in \R^n\setminus L\cap K$.  Let $u_L:=\argmin_v\{\|u-v\|: v \in L\}$ and $u_K:=\argmin_v\{\|u-v\|: v \in K\}$.  The construction of $H$ implies that there exist $x\in L, y\in K$ such that $\|x\| \le H\cdot \|u_K-u_L\|$ and
$
x-y = u_K-u_L.
$
Hence
$
u_L+x = u_K+y \in L\cap K
$
and 
\begin{align*}
\dist(u,L\cap K) &\le \|u-u_L-x\|  \\
&\le \|u-u_L\| + \|x\| \\
&\le \|u-u_L\| + H\cdot \|u_K-u_L\| \\
& \le \max\{\dist(u,L),\dist(u,K)\} + H \cdot (\dist(u,K) + \dist(u,L))\\
&\le (1+2 H) \cdot\max\{\dist(u,L),\dist(u,K)\}.
\end{align*}
Since this holds for any $u\in \R^n\setminus L\cap K$, it follows that
\[
\H(L,K) \le 1+ 2H.
\]
\end{proof}

For computational purposes, it is useful to note that if $\bar x \in L \cap \text{\rm int}(K)$ is such that $\bar x + u \in K$ for all $\|u\|\le 1$ then Proposition~\ref{prop.case.3} implies that
\[
\H(L,K) \le 1 + 2\|\bar x\|.
\]

\section{A computable procedure to bound $H_0(A)$}
\label{sec.computation}
We next describe a procedure to compute an upper bound on $H_0(A)$.  The procedure consists of four main steps.  First, compute the partition $B,N$.  Second, compute an upper bound on $H_0(A_B)$.  Third, compute an upper bound on $H_0(A_N)$.  Fourth, compute an upper bound on $\H(L,K)$.  An upper bound on $H_0(A)$ thereby follows from Theorem~\ref{main.thm}.  For computational convenience, throughout this section we assume that $\R^m$ is endowed with the $\ell_\infty$ norm and $\R^n$ is endowed with the $\ell_2$ norm.  A Python implementation and some illustrative examples of this procedure are publicly available at
\begin{center}
{\tt https://github.com/javi-pena} 
\end{center}

\subsection*{Step 1: partition $B,N$}

The partition $B,N$ can be obtained from any point $(x,y,s,t)$ that satisfies the following systems of equations and inequalities for some $t > 0$:
\begin{equation}\label{eq.BN}
\begin{array}{rl} 
&A\transp y = 0\\
& Ax +s = 0 \\
& y + s -t\1 \ge 0 \\
&\1\transp y + \1\transp s = 1\\
&y \ge 0, s \ge 0.
\end{array}
\end{equation}
More precisely, if $(x,y,s,t)$ satisfies~\eqref{eq.BN} with $t>0$ then $B,N$ can be obtained as follows:
\[
B:=\{i: y_i >0\}, \; \; N:=\{i:s_i > 0\}.
\]
Proposition~\ref{prop.partition} guarantees that a solution $(x,y,s,t)$ to~\eqref{eq.BN} with $t>0$ always exists and that the associated partition $B,N$ is unique.  Such a point $(x,y,s,t)$ can be computed via the following linear program:
\begin{equation}\label{eq.LP}
\begin{array}{rl} \dmax_{x,y,s,t} & t \\
&A\transp y = 0\\
& Ax +s = 0 \\
& y + s -t\1 \ge 0 \\
&\1\transp y + \1\transp s = 1\\
&y \ge 0, s \ge 0.
\end{array}
\end{equation}

\subsection*{Step 2: upper bound on $H_0(A_N)$}
Suppose $N \ne \emptyset$ as otherwise $H_0(A_N) = 0$.
The remarks following Proposition~\ref{prop.case.1} show that \[H_0(A_N)\le \|\bar x\|_2\] for any $\bar x\in \R^n$ such that $A_N\bar x \ge \1$.   The best such upper bound can be computed via the following quadratic program
\begin{equation}\label{eq.QP.N}
\bar x := \argmin\{ \|x\|^2_2 : A_Nx \ge \1\}.
\end{equation}

\subsection*{Step 3: upper bound on $H_0(A_B)$}
Suppose $B \ne \emptyset$ as otherwise $H_0(A_B) = 0$.
The remarks following Proposition~\ref{prop.case.2} show that 
\[
H_0(A_B) \le \frac{2}{\sigma_{\min}^+(A_B\transp \bar Y)}
\]
for any $\bar y\in \R^B_{++}$ such that $\1_B\transp \bar y = 1$ and $A_B\transp \bar y = 0$.  Although the best such upper bound is challenging to compute, an upper bound of this kind that is within a factor of $\sqrt{|B|}$ of the best possible one can be computed via the following convex program
\begin{equation}\label{eq.CP.B}
\bar y := \argmin_{y\in \R^B_{++}}\left\{-\sum_{i\in B} \log(y_i): \1_B\transp y = 1, A_B\transp y = 0\right\}.
\end{equation}

\subsection*{Step 4: upper bound on $\H(L,K)$}
Suppose both $N \ne \emptyset$ and $B \ne \emptyset$ as otherwise $\H(L,K)=1$ or $\H(L,K) = 0$.
Let $Q$ be an orthonormal basis for $L:=\{x:A_Bx =0\}$ and $M = DA_NQ$ where $D$ is the diagonal matrix with positive diagonal entries such that all rows of $DA_N$ have Euclidean norm equal to one. Then the remarks following Proposition~\ref{prop.case.3} imply that
\[
\H(L,K) \le 1+2\|\bar z\|_2
\]
for any $\bar z \ge 0$ such that $M\bar z \ge \1$. The best such upper bound can be computed via the following  quadratic program
\begin{equation}\label{eq.QP.H}
\bar z := \argmin\{ \|z\|^2_2 : Mz \ge \1\}.
\end{equation}

\subsection*{Putting it all together: a procedure to bound $H_0(A)$}

Theorem~\ref{main.thm} allows us to stitch together the partition $B,N$ and the upper bounds on $H_0(A_B),$ $H_0(A_N),$ and $\H(L,K)$ to obtain an upper bound on $H_0(A)$ as detailed in Algorithm~\ref{the.algo} below.

\begin{algorithm}
\caption{Upper bound on $H_0(A)$}\label{the.algo}
\begin{algorithmic}[1]
\State {\bf input:}  $A \in \R^{m\times n}\setminus \{0\}$
\State  solve~\eqref{eq.LP} to enough accuracy to get a solution $(x,y,s,t)$ to~\eqref{eq.BN} with $t>0$ 
\State Let $B:=\{i: y_i > 0\}, N:=\{i:s_i > 0\}$
\If{$N\ne \emptyset$}
\State solve~\eqref{eq.QP.N} to enough accuracy to get $\bar x \in \R^n$ such that  $A_N\bar x \ge \1$
\EndIf
\If {$B\ne \emptyset$}
\State solve~\eqref{eq.CP.B} to enough accuracy to get $\bar y \in \R^B_{++}$ such that  $\1_B\transp \bar y =1$ and $A_B\transp \bar y = 0$
\EndIf
\If{$N=\emptyset$} return the upper bound 
$$
H_0(A) \le \frac{2}{\sigma_{\min}^+(A_B\transp \bar Y)}
$$ 
\EndIf
\If{$B=\emptyset$}
 return the upper bound 
$$
H_0(A) \le \|\bar x\|_2
$$ 
\EndIf

\State let $Q$ be an orthonormal basis for $L:=\{x:A_Bx =0\}$ and $M = DA_NQ$ where $D$ is the diagonal matrix with positive diagonal entries such that all rows of $DA_N$ have Euclidean norm equal to one
\State solve~\eqref{eq.QP.H} to enough accuracy to get $\bar z$ such that $ M\bar z \ge \1$
\State return the upper bound 
\[
H_0(A) \le (1+2\|\bar z\|_2) \cdot \max\left\{\|\bar x\|_2,\frac{2}{\sigma_{\min}^+(A_B\transp \bar Y)} \right\}
\]
\end{algorithmic}
\end{algorithm}

\section{Funding, conflict of interest, and data information}

This research has been supported by the Bajaj Family Chair at the Tepper School of Business.  The author has no competing interests to declare that are relevant to the content of this article.  The developments on this paper followed a theoretical mathematical approach and did not analyze or generate any datasets.

\bibliographystyle{plain}

\end{document}